\documentclass{amsart}

\newtheorem{theorem}{Theorem}[section]
\newtheorem{lemma}[theorem]{Lemma}

\theoremstyle{definition}

\newtheorem{example}[theorem]{Example}

\newtheorem{corollary}[theorem]{Corollary}
\theoremstyle{remark}
\newtheorem{remark}[theorem]{Remark}
\newtheorem{proposition}[theorem]{Proposition}

\numberwithin{equation}{section}

\usepackage{xcolor}
\usepackage{tikz}
\usepackage{pgfplots}
\pgfplotsset{compat=1.18}

\usepgfplotslibrary{colormaps}

\begin{document}

\title[Polyhedral norms and Hahn-Banach extension]{Polyhedral norms and smooth Hahn-Banach extension}

\author{Saikat Roy}

\address[Roy]{Department of Mathematics, School of Advanced Sciences, VIT-AP University, Beside AP Secretariat, Amaravati, 522241, Andhra Pradesh, India.}
\email{saikatroy.cu@gmail.com, saikat.roy@vitap.ac.in}

\thanks{The author thanks Professor Vladimir Kadets for a helpful discussion.}

\subjclass[2020]{Primary 46A22,  Secondary 46B20, 46B25}



\keywords{polyhedral norm; smooth extension; extreme functionals; Hahn-Banach extension; norm preserving restriction}

\begin{abstract}
We find a necessary and sufficient condition for a smooth functional on a subspace to admit a norm-preserving smooth extension to the entire space in polyhedral norms. The characterization is geometric: such an extension exists if and only if the unique absolute norm-attaining point of the smooth functional is an extreme point of both the unit ball of the subspace and that of the ambient space. We show by example that such a result is not true in non-polyhedral norms, even under sufficiently strong hypothesis. Extremity of the norm preserving restrictions of extreme functionals are also discussed.
\end{abstract}

\maketitle

\section{Introduction}
\noindent Throughout the text, we work only with finite-dimensional real Banach space $X$ whose unit sphere is a polyhedron. We refer these spaces as polyhedral space. We denote the closed unit ball and unit sphere of $X$ by $B_X$ and $S_X$, respectively. The collection of smooth points lying in the unit sphere of $X$ is denoted by $sm(B_X)$, and the collection of all extreme points of $B_X$ is denoted by $ext(B_X)$.

\medskip

\subsection{Motivation}\hfill\\
\noindent Extension of linear functionals defined on a subspace to the whole space while retaining certain analytic properties has long been a central theme in functional analysis. The most fundamental result in this direction is the Hahn–Banach extension, which guarantees that every bounded linear functional defined on a subspace admits a norm-preserving extension to the entire space. Beyond norm preservation, several results address the extension of functionals with additional geometric features. A classical theorem due to Singer asserts that if $Y$ is a subspace of a normed linear space $Z$, then every extreme point of $B_{Y^*}$ admits an extension which is an extreme point of $B_{Z^*}$.

\begin{theorem}\label{Thm: Singer}\cite{S56, S65}
Let $Z$ be a (real or complex) normed linear space, $Y$ a linear subspace of $Z$ and $f$ an extreme point of the unit ball $B_{Y^*}$ of $Y$. Then there exists an extension of $f$ to an extreme point $\widetilde{f}$ of the unit ball $B_{Z^*}$ of $Z^*$.
\end{theorem}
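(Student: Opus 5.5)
The plan is the classical Krein--Milman argument applied to the set of all norm-preserving extensions. Fix an extreme point $f$ of $B_{Y^*}$; then $\|f\|=1$. Consider
\[
E(f)=\{g\in Z^* : g|_Y=f,\ \|g\|=1\}=\{g\in B_{Z^*} : g|_Y=f\}.
\]
The two descriptions agree because any $g\in B_{Z^*}$ with $g|_Y=f$ automatically has $\|g\|\ge\|f\|=1$.

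\textbf{Step 1: $E(f)$ is a nonempty weak$^*$-compact convex set.} Nonemptiness is exactly the Hahn--Banach theorem, in the real or complex version as appropriate. Convexity is clear. For compactness, note that $E(f)$ is the intersection of $B_{Z^*}$ with the weak$^*$-closed affine set $\{g: g(y)=f(y)\ \forall y\in Y\}$. Since $B_{Z^*}$ is weak$^*$-compact by Banach--Alaoglu, $E(f)$ is weak$^*$-compact. In the finite-dimensional setting of this paper this is just ordinary compactness.

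\textbf{Step 2: choose the extension.} By the Krein--Milman theorem, $E(f)$ has an extreme point $\widetilde f$, and $\widetilde f$ extends $f$.

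\textbf{Step 3: $\widetilde f$ is extreme in $B_{Z^*}$.} This is the main point, and it is where the extremality of $f$ is used. Suppose $\widetilde f=\lambda g_1+(1-\lambda)g_2$ with $g_1,g_2\in B_{Z^*}$ and $0<\lambda<1$. Restricting to $Y$ gives
\[
f=\lambda\, g_1|_Y+(1-\lambda)\, g_2|_Y,
\]
where $\|g_i|_Y\|\le 1$, so $g_i|_Y\in B_{Y^*}$. Since $f$ is extreme in $B_{Y^*}$, we get $g_1|_Y=g_2|_Y=f$, and hence $g_1,g_2\in E(f)$. Since $\widetilde f$ is extreme in $E(f)$, it follows that $g_1=g_2=\widetilde f$. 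In other words, $E(f)$ is a face of $B_{Z^*}$, and extreme points of a face are extreme in the whole set.

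The only delicate point is Step 1 in infinite dimensions, namely that weak$^*$-compactness holds so that Krein--Milman applies. This is handled by Banach--Alaoglu. In the complex case one uses the complex Hahn--Banach theorem, and the convexity argument is unchanged because the scalars $\lambda$ remain real.
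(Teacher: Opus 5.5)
Your proposal is correct and follows the same route as the paper, which only sketches the argument: apply Krein--Milman to the weak$^*$-compact convex set $E(f)$ of norm-preserving extensions of $f$. Your Step 3 shows that $E(f)$ is a face of $B_{Z^*}$, so its extreme points are extreme in $B_{Z^*}$; the paper's sketch leaves this implicit, and it is exactly where extremality of $f$ is used (one small caveat: $\|f\|=1$ fails when $Y=\{0\}$, but there your second description gives $E(f)=B_{Z^*}$ and the argument still works).
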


\noindent The result follows from the Krein--Milman theorem and the fact that the
collection of all norm-preserving extensions of $f$ is a weak$^*$-compact
convex subset of $B_{Z^*}$, and hence possesses extreme points. This motivates the study of whether Singer’s theorem continues to hold when the notion of extremality is replaced by smoothness. More precisely, one may ask whether a smooth functional defined on a subspace admits a norm-preserving smooth extension to the whole space. The problem is finer than extremality, as Example~\ref{nonexample: nonpolyhedral} shows that such a lifting is not always possible in non-polyhedral norms, even under strong additional assumptions. Moreover, even within the class of polyhedral norms, not every smooth functional on a subspace admits a smooth norm-preserving extension, as demonstrated in
Example~\ref{nonexample: smooth}.

\medskip

\noindent The main objective of this short note is to investigate the existence of smooth norm-preserving extensions of smooth linear functionals in polyhedral spaces. We obtain necessary and sufficient conditions for the existence of smooth Hahn–Banach extensions in terms of extreme points, which is the main result of the article.
\begin{theorem}\label{smooth HB liftings} 
Let $X$ be a polyhedral space and $Y$ be a subspace of it. Then any smooth functional $f$ on $Y$ extends to a norm-preserving smooth functional $\widetilde{f}$ on $X$ if and only if $f$ attains its absolute norm at an extreme point of $B_Y$ which is also an extreme point of $B_X$.
\end{theorem}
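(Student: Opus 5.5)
We will use the polar duality between the finite-dimensional polyhedra $B_X$ and $B_{X^*}$, and likewise between $B_Y$ and $B_{Y^*}$. For $x\in S_X$ put $F_x=\{g\in B_{X^*}: g(x)=1\}$, and for $y\in S_Y$ put $F^Y_y=\{h\in B_{Y^*}: h(y)=1\}$. The standard polarity facts we need are the following.
\begin{enumerate}
\item $x\mapsto F_x$ gives a bijection between the vertices of $B_X$ and the facets of $B_{X^*}$, and $F_x$ is a facet exactly when $x\in ext(B_X)$.
\item A functional $g\in S_{X^*}$ is smooth if and only if it lies in the relative interior of a facet of $B_{X^*}$.
\item Equivalently, $g\in S_{X^*}$ is smooth if and only if $g$ attains its norm at a unique pair $\pm x$, and this $x$ is then an extreme point of $B_X$.
\end{enumerate}
The same statements hold for $Y$. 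In particular, a smooth $f$ on $Y$ has a unique absolute norm-attaining point $\pm y$ with $y\in ext(B_Y)$, and $f\in \operatorname{ri} F^Y_y$. This is the sense of ``absolute norm-attaining point'' in Theorem~\ref{smooth HB liftings}.

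\emph{Necessity.} Let $\widetilde f$ be a smooth norm-preserving extension of $f$. By (3) it attains its norm only at $\pm x$, for some $x\in ext(B_X)$. Since $f$ attains its norm at $y$, we have $\widetilde f(y)=f(y)=1=\|\widetilde f\|$, so $y=\pm x$. Hence $y\in ext(B_X)$. Moreover $y\in ext(B_Y)$, either by smoothness of $f$ or simply because an extreme point of $B_X$ lying in $Y$ is extreme in $B_Y$.

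\emph{Sufficiency.} Assume $y\in ext(B_Y)\cap ext(B_X)$ is the norming point of $f$. Consider the restriction map $R:X^*\to Y^*$, $R(g)=g|_Y$. It maps $F_y$ into $F^Y_y$, since $\|g|_Y\|\le 1$ and $g(y)=1$. It maps $F_y$ onto $F^Y_y$ by the Hahn--Banach theorem: a norm-preserving extension $g$ of $h\in F^Y_y$ satisfies $g(y)=1$, so $g\in F_y$. Because $R$ is linear and $F_y$ is a convex polytope, the standard convex-analysis fact $R(\operatorname{ri} C)=\operatorname{ri} R(C)$ gives $R(\operatorname{ri} F_y)=\operatorname{ri} F^Y_y$. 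Since $f\in \operatorname{ri} F^Y_y$, there is $\widetilde f\in \operatorname{ri} F_y$ with $\widetilde f|_Y=f$, and $\|\widetilde f\|=1=\|f\|$. By (1), $F_y$ is a facet of $B_{X^*}$ because $y\in ext(B_X)$. Therefore $\widetilde f$ lies in the relative interior of a facet, and (2) shows that $\widetilde f$ is smooth. To see this directly: if $\widetilde f(x)=1$, then $F_x\cap F_y$ is a face of the facet $F_y$ containing a relative interior point. Hence $F_x\supseteq F_y$, and since $F_y$ is a facet, $F_x=F_y$, which forces $x=y$ by (1).

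The main care is needed in establishing the polarity dictionary (1)--(3) cleanly in the polyhedral setting, in particular that smoothness is exactly membership in the relative interior of a facet. Once that is in place, the key step is the surjectivity of $R:F_y\to F^Y_y$ combined with the fact that a linear surjection between polytopes carries relative interior onto relative interior. This step is where the hypothesis $y\in ext(B_X)$ is used: it guarantees that $F_y$ is a full facet, so that its relative interior consists of smooth functionals.
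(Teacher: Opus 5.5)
Your proof is correct. The necessity half matches the paper's: the smooth extension has a singleton norm-attainment set, which must be $\{y\}$, so $y$ is exposed in $B_X$. For sufficiency you take a genuinely different, more structural route.

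The paper builds the extension by hand. It first shows that the set $S_{y_0}$ of extreme functionals of $B_{X^*}$ equal to $1$ at $y_0$ spans $X^*$; this uses a perturbation argument: if $u\neq 0$ annihilates $S_{y_0}$, then $y_0\pm\lambda u\in S_X$, contradicting extremality. It then writes $g$ as a strictly positive combination of the vertices $f_1,\dots,f_p$ of $W=\{f\in B_{Y^*}: f(y_0)=1\}$. Next it forms a strictly positive convex combination of extreme extensions of the $f_j$ together with extra elements of $S_{y_0}$, tuning the weights $\alpha_j,\beta_r$ so that the restriction is exactly $g$. Smoothness follows because $A_{\widetilde g}$ lies in the intersection of the sets $y_0+\ker l$ over a spanning family, which is $\{y_0\}$.

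You replace both ingredients by standard facts:
\begin{enumerate}
\item The polar face correspondence. The only piece you actually use is that $F_y$ has dimension $\dim X-1$ when $y$ is a vertex, which is exactly what the paper's spanning argument establishes.
\item The identity $R(\operatorname{ri}C)=\operatorname{ri}R(C)$ for linear images of convex sets, applied to the Hahn--Banach surjection $R\colon F_y\to F^Y_y$.
\end{enumerate}

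Your version is shorter and avoids all the coefficient bookkeeping, including selecting finitely many extreme extensions of each $f_j$. It also isolates precisely where polyhedrality enters: only through $F_y$ being a full facet. As a by-product, it identifies the smooth norm-preserving extensions of $f$ as exactly the points of $R^{-1}(f)\cap\operatorname{ri}F_y$.

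The paper's argument, in exchange, is elementary and essentially self-contained, working directly from the finite list of extreme functionals describing $S_X$. It also produces an explicit formula for the smooth extension as a convex combination of extreme functionals.
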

\noindent The result characterizes the subspaces of a polyhedral space where every smooth linear functional admits smooth Hahn-Banach extension.
\begin{corollary}\label{first cor}
Let $X$ be a polyhedral space and $Y$ be a subspace of it. Then every smooth linear functional on $Y$ admits a smooth Hahn-Banach extension if and only if $ext(B_Y)\subset ext(B_X)$.
\end{corollary}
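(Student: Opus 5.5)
The corollary will be deduced from Theorem~\ref{smooth HB liftings}, after one geometric fact: in a polyhedral space the map sending a smooth functional to its unique absolute norm-attaining point hits every extreme point. Concretely, if $Y$ is polyhedral (it is, being a subspace of a polyhedral space) and $y_0\in ext(B_Y)$, then there is a smooth functional $f$ on $Y$ whose unique norm-attaining point in $B_Y$ is $y_0$, up to the sign convention for ``absolute norm''. The plan is to prove this fact first and then apply the theorem in both directions.

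To prove the fact, I will use that $B_{Y^*}$ is a polytope whose facets correspond bijectively to the vertices of $B_Y$. So $y_0$ determines the facet $F_{y_0}=\{g\in B_{Y^*}: g(y_0)=1\}$ of $B_{Y^*}$. I pick $f$ in the relative interior of $F_{y_0}$. Then $f$ lies in exactly one facet of $B_{Y^*}$, and that is precisely the statement that $f$ attains its norm only at $\pm y_0$. Since $B_Y$ is a polytope, the norm-attaining set of $f$ is a face of $B_Y$; by this choice of $f$ that face is $\{y_0\}$, so $f$ is a smooth point of $B_{Y^*}$ with the required property. I will state this duality carefully: smooth points of $B_{Y^*}$ are exactly the points in the relative interior of a facet, and facets of the dual polytope are the sets $F_y$ for $y\in ext(B_Y)$.

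For the direction ``if'', assume $ext(B_Y)\subset ext(B_X)$ and let $f$ be a smooth functional on $Y$. I need to check that it attains its norm at a single point $\pm y_0$, and that this point is an extreme point of $B_Y$. The norm-attaining set of $f$ in $B_Y$ is a face of a polytope, so it contains a vertex. Smoothness of $f$ forces this face to be a single point: if the face contained two vertices $y_1,y_2$, then $f$ would lie in both $F_{y_1}$ and $F_{y_2}$, contradicting the fact that $f$ lies in a unique facet. Hence $y_0\in ext(B_Y)\subset ext(B_X)$, and Theorem~\ref{smooth HB liftings} produces the smooth extension.

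For the direction ``only if'', take any $y_0\in ext(B_Y)$ and choose $f$ as in the first step. By hypothesis, $f$ has a smooth Hahn--Banach extension, so Theorem~\ref{smooth HB liftings} says $f$ attains its norm at a point that is extreme in both $B_Y$ and $B_X$. That point must be $\pm y_0$, and since $ext(B_X)$ is symmetric, $y_0\in ext(B_X)$. The main obstacle is the polytope duality used in the first step, namely the facet--vertex correspondence and the identification of smooth functionals with relative-interior points of facets. Everything else is bookkeeping with Theorem~\ref{smooth HB liftings}.
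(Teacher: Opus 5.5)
Your proposal is correct and takes essentially the paper's route. Both directions reduce to Theorem~\ref{smooth HB liftings}; the paper simply re-runs the necessity argument $A_f\subset A_{\widetilde f}=\{y_0\}$ inline. Your relative-interior-of-the-dual-facet construction is an explicit justification of the paper's appeal to ``the exposing functional of $y_0$'', which is smooth by Lemma~\ref{smooth} since its norm-attainment set is $\{y_0\}$.
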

\noindent In another form, if $ext(B_Y)\subset ext(B_X)$ for a subspace $Y$ of $X$, then the norm preserving restrictions of smooth linear functionals are precisely the smooth linear functionals in $Y$.
\begin{corollary}\label{second cor}
Let $X$ be a polyhedral Banach space and let $Y$ be a subspace of $X$. Then 
\begin{equation}\label{Equation: smooth}
sm(Y^*)=\{f\in Y^*:~f=\widetilde{f}|_Y,~\widetilde{f}\in sm(X^*),~\|\widetilde{f}|_Y\|=1\}  
\end{equation}
if and only if $ext(B_Y)\subset ext(B_X)$.
\end{corollary}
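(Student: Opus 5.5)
The plan is to read Corollary~\ref{second cor} as a reformulation of Corollary~\ref{first cor}, and hence of Theorem~\ref{smooth HB liftings}. Throughout we take $sm(Y^*)$ to mean the smooth points of $S_{Y^*}$, as the normalisation $\|\widetilde f|_Y\|=1$ on the right of \eqref{Equation: smooth} indicates. Call the right-hand set $R$.

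\textbf{Step 1: Split the equality.} We always have $sm(Y^*)\subseteq R$ or not, and the reverse inclusion $R\subseteq sm(Y^*)$ is what actually matters. So the first task is to pin down which inclusion holds unconditionally. Suppose $f\in sm(Y^*)$ admits a smooth extension $\widetilde f\in sm(X^*)$ with $\|\widetilde f\|=\|f\|=1$. Then $f=\widetilde f|_Y$ with $\|\widetilde f|_Y\|=1$, so $f\in R$. Hence $sm(Y^*)\subseteq R$ for a given $f$ exactly when $f$ has a smooth Hahn--Banach extension. By Corollary~\ref{first cor}, the inclusion $sm(Y^*)\subseteq R$ therefore holds for all smooth $f$ if and only if $ext(B_Y)\subset ext(B_X)$.

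\textbf{Step 2: The reverse inclusion.} Take $\widetilde f\in sm(X^*)$ with $\|\widetilde f\|=1=\|\widetilde f|_Y\|$, and set $f=\widetilde f|_Y$. In a polyhedral space, a smooth functional on $S_{X^*}$ is an exposing functional of a facet of $B_{X^*}$. Dually, it attains its norm on $B_X$ exactly at the points of $\pm$ a single vertex $v$, which is an extreme point of $B_X$; this is the ``unique absolute norm-attaining point'' mentioned in the abstract. Since $\|f\|=1$ and $B_Y$ is compact, $f$ attains its norm at some $y\in S_Y$. Then $\widetilde f(y)=1$, so $y=\pm v$. Consequently $f$ attains its norm on $B_Y$ only at $\pm v$, with $v\in Y$.

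\textbf{Step 3: Closing the argument.} For $f$ to be smooth in $Y^*$, the norm-attaining set $\{\pm v\}$ must be a vertex of $B_Y$ whose normal cone in $Y^*$ has nonempty interior, i.e.\ $f$ must lie in the interior of that cone. This follows because $v$ is a vertex of $B_Y$ (extreme in $B_X$, hence extreme in $B_Y\subseteq B_X$), and a polyhedral functional attaining its norm only at $\pm v$ lies in the interior of the normal cone at $v$. Such interior points are exactly the smooth points of $S_{Y^*}$. So $R\subseteq sm(Y^*)$ holds unconditionally, and the equality \eqref{Equation: smooth} reduces to $sm(Y^*)\subseteq R$, which by Step 1 is equivalent to $ext(B_Y)\subset ext(B_X)$.

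The main obstacle I expect is Step 3: making rigorous the duality fact that, for polyhedral norms, $f\in S_{Y^*}$ is smooth if and only if its norm-attaining set in $B_Y$ is a single antipodal pair of vertices. I would prove this via the face correspondence between $B_Y$ and the polar polytope $B_{Y^*}$. Under this correspondence, $f$ lies in the relative interior of a facet of $B_{Y^*}$ precisely when its exposed face in $B_Y$ is a vertex.
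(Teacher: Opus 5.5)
Your proof is correct and is the paper's intended argument made explicit. The paper only says the corollary is a direct consequence of Theorem~\ref{smooth HB liftings} and Corollary~\ref{first cor}, which is your Step~1, and it leaves implicit the unconditional inclusion $R\subseteq sm(Y^*)$ that you supply in Steps~2--3.

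The ``main obstacle'' you anticipate in Step~3 is already Lemma~\ref{smooth}, which holds in any finite-dimensional space. So Steps~2--3 collapse to: $A_{\widetilde f}=\{v\}$, hence $\emptyset\neq A_{\widetilde f|_Y}\subseteq A_{\widetilde f}=\{v\}$ by compactness of $B_Y$ and $\|\widetilde f|_Y\|=1$, hence $\widetilde f|_Y$ is smooth in $Y^*$. No normal-cone argument is needed. Also, $\widetilde f(y)=1$ forces $y=v$, not $y=\pm v$.
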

\noindent We also discuss about the restriction of an extreme functional on $X$ to a subspace $Y$, which in general need not be an extreme point of $B_{Y^*}$, and it is quite expected, as the restriction may even fail to be norm-preserving. However, as shown in Example~\ref{norm pres. restr.}, even norm preserving restriction of an extreme linear functional may not remain extreme. We further obtain a necessary and sufficient condition under which the restriction of an extreme functional remains extreme. Although the characterization is fairly elementary, it provides important geometric insight. In the final section we show that our results facilitate the understanding of the smooth extension problem from a purely geometric consideration. 

.

\section{Preliminaries} 

\noindent A finite-dimensional real Banach space $X$ is said to be polyhedral if $B_X$ is a polyhedron; equivalently, $B_X$ has finitely many extreme points. If $X$ is polyhedral, then the exposed points of $B_X$ are precisely the extreme points of $B_X$. A subset $F$ of $B_X$ is called a face of $B_X$, if $\lambda x+(1-\lambda) y\in F$ implies that $x,y\in F$ for any $\lambda\in [0,1]$. The maximal faces of $B_X$ are called facets. The relative interior of a face $F$ is denoted by $relint~(F)$, which is defined by
\[
relint~(F)=\{x\in F:~\exists~ \varepsilon>0~\text{such that}~(x+\varepsilon B_X)\cap F\subset F\}
\]
We have a nice description of $relint~(F)$ for face of $B_X$ due to \cite[Theorem 6.9]{R70}.
\begin{proposition}\label{prop:smooth}
Let $X$ be a polyhedral space and $F$ be a face of $B_X$. Then,
\[
relint~(F)=\{\sum_{k=1}^n \lambda_k x_k:~~\sum_{k=1}^n \lambda_k=1, \lambda_k>0~\text{for all}~k\}.
\]
where $ext(F)=\{x_1,x_2,\dots,x_n\}$.
\end{proposition}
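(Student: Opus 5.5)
Throughout, I will read $relint~(F)$ in the usual sense, namely with respect to the affine hull: $x\in relint~(F)$ if there is $\varepsilon>0$ with $(x+\varepsilon B_X)\cap \operatorname{aff}(F)\subset F$. As printed, the inclusion $(x+\varepsilon B_X)\cap F\subset F$ holds for every $x$, so this is clearly the intended meaning. The first step is to note that $F$ is a compact convex subset of the finite-dimensional space $X$. Moreover, $F$ has finitely many extreme points $x_1,\dots,x_n$, since $ext(F)=F\cap ext(B_X)$ because $F$ is a face. By the Krein--Milman (Minkowski) theorem, $F=\operatorname{conv}\{x_1,\dots,x_n\}$. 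Write $P$ for the set on the right-hand side of the proposition. I will prove the two inclusions separately.

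For $P\subset relint~(F)$, let $x=\sum_k\lambda_k x_k$ with all $\lambda_k>0$ and $\sum_k\lambda_k=1$, and put $\delta=\min_k\lambda_k>0$. The direction space $V=\operatorname{aff}(F)-x$ is spanned by $x_k-x_1$, $k=2,\dots,n$. Fix a basis of $V$ chosen among these vectors and extend it by zeros to a linear map $T\colon V\to\mathbb{R}^n$ with $v=\sum_k (Tv)_k x_k$ and $\sum_k (Tv)_k=0$. This is possible because each $x_k-x_1$ has coefficient vector $e_k-e_1$, whose entries sum to $0$. Since $V$ is finite-dimensional, $T$ is bounded, say $\|Tv\|_\infty\le C\|v\|$. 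For $y\in\operatorname{aff}(F)$ with $\|y-x\|<\delta/C$ we get $y=\sum_k(\lambda_k+(T(y-x))_k)x_k$. These coefficients sum to $1$ and are all positive, so $y\in F$. Hence $x\in relint~(F)$.

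For $relint~(F)\subset P$, let $x\in relint~(F)$ and let $c=\frac1n\sum_k x_k$ be the barycentre. Since $c\in\operatorname{aff}(F)$, the point $x+t(x-c)$ lies in $\operatorname{aff}(F)$ for every $t$, and for small $t>0$ it also lies in $x+\varepsilon B_X$. So $z:=x+t(x-c)\in F$, and we can write $z=\sum_k\mu_k x_k$ with $\mu_k\ge0$ and $\sum_k\mu_k=1$. Then
\[
x=\tfrac{1}{1+t}\,z+\tfrac{t}{1+t}\,c=\sum_{k=1}^n\Big(\tfrac{\mu_k}{1+t}+\tfrac{t}{n(1+t)}\Big)x_k ,
\]
and every coefficient is strictly positive, with total $1$. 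So $x\in P$. The only step needing care is the uniform control of coefficients in the first inclusion, which the bounded map $T$ handles. Apart from this and the correct reading of the definition of $relint~(F)$, the argument is elementary; alternatively one may simply cite \cite[Theorem 6.9]{R70}.
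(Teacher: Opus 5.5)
Your argument is correct, but it does not follow the paper's route. The paper gives no proof of this statement; it just cites \cite[Theorem 6.9]{R70}. That theorem says that for nonempty convex sets $C_1,\dots,C_m$ one has $\operatorname{ri}\operatorname{conv}(C_1\cup\dots\cup C_m)=\bigcup\{\lambda_1\operatorname{ri}C_1+\dots+\lambda_m\operatorname{ri}C_m:\ \lambda_i>0,\ \sum_i\lambda_i=1\}$. Taking $C_k=\{x_k\}$ and using $F=\operatorname{conv}(ext(F))$ gives the statement at once.

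You instead prove this special case from scratch. For $P\subset relint~(F)$ you use a bounded coefficient map $T$ on the direction space of $\operatorname{aff}(F)$. For the reverse inclusion you push the point away from the barycentre. The citation is shorter and more general, since it handles arbitrary convex pieces rather than finitely many points. Your version is elementary and makes explicit the two facts actually used: $F$ is the convex hull of finitely many points, and the relative interior is taken inside $\operatorname{aff}(F)$. It also correctly repairs the printed definition of $relint~(F)$, which as written is satisfied by every point of $F$.

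The one step you assert without justification is that $F$ is compact, which you need for Minkowski's theorem. This is true, since faces of closed convex sets in finite dimensions are closed, but you can bypass it. Write $x\in F$ as a convex combination, with positive weights, of points of the finite set $ext(B_X)$. The face property then forces each of those points into $F$, so $F=\operatorname{conv}(F\cap ext(B_X))$ directly.
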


\noindent Our techniques are based on absolute norm attainment set and extreme point analysis. Given any member $f\in X^*$, the absolute norm attainment set of $f$ is denoted by $A_f$, and defined by
\[
A_f:=\{x\in S_X:~f(x)=\|f\|\}.
\]
Facets of $B_X$ can be characterized in terms of the absolute norm attainment sets of the extreme points of the dual unit ball.
\begin{proposition}
Let $X$ be a polyhedral space. Then, $F$ is a facet of $B_X$ if and only if $F=A_f$ for some unique $f\in ext(B_{X^*})$.
\end{proposition}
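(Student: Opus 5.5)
We prove the two implications separately, working throughout in the finite-dimensional polyhedral setting where $X^*$ is again polyhedral and the extreme points of $B_{X^*}$ are finite in number.

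\emph{($\Leftarrow$).} Suppose $F$ is a facet of $B_X$.

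\begin{enumerate}
\item By the definition of facet, $F$ is a face of $B_X$, so we may write $F=A_g$ for some norm-one $g\in X^*$ (faces of a polytope are exposed). Choosing $g$ with $A_g=F$ maximal among exposed faces forces $g$ to be extreme. Concretely, if $g=\tfrac12(g_1+g_2)$ with $g_1\neq g_2$ in $B_{X^*}$, then $A_g=A_{g_1}\cap A_{g_2}$. Maximality of $F$ gives $A_{g_1}=A_{g_2}=F$.
\item Since $F$ is maximal, hence of codimension one, it spans a hyperplane not through $0$. A functional equal to $1$ on $F$ is therefore unique, so $g_1=g_2$ and $g\in ext(B_{X^*})$.
\end{enumerate}

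\emph{($\Rightarrow$).} Suppose $F=A_f$ with $f\in ext(B_{X^*})$.

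\begin{enumerate}
\item $A_f$ is a nonempty exposed face. We must show it is maximal.
\item Suppose $A_f\subsetneq G$ for some facet $G$. By the first direction, $G=A_h$ with $h$ extreme, and $h\neq f$.
\item Use duality for polytopes: the face $A_f$ of $B_X$ corresponds to the face $\{\phi\in B_{X^*}:\phi(x)=1\ \forall x\in A_f\}$ of $B_{X^*}$. This correspondence is inclusion-reversing and bijective between the face lattices of $B_X$ and $B_{X^*}$.
\item Under it, facets of $B_X$ correspond to vertices of $B_{X^*}$. Indeed, the face dual to $A_f$ contains $f$. If that dual face were larger than $\{f\}$, then $A_f$ would not be a facet.
\item The cleaner route is to show directly that the dual face of $A_f$ is $\{f\}$, using extremality. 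The dual face $D$ of $A_f$ is a face of $B_{X^*}$ containing $f$. By the face-lattice bijection, $D=\{f\}$ exactly when $A_f$ is maximal.
\item Since $f$ is extreme, $\{f\}$ is itself a face of $B_{X^*}$. Its dual face is $\{x\in B_X: f(x)=1\}=A_f$, and by the bijection the dual face of $A_f$ is $\{f\}$. A minimal nonempty face of $B_{X^*}$ corresponds to a maximal proper face of $B_X$, so $A_f$ is a facet.
\end{enumerate}

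\emph{Uniqueness.} If $A_f=A_{f'}$ for two functionals, both equal $1$ on the affinely spanning set $F$ of a hyperplane missing $0$. Hence $f=f'$.

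\emph{Main obstacle.} The key step is justifying the polar face-lattice anti-isomorphism for polytopes containing $0$ in their interior. This is standard (e.g.\ from \cite{R70}), and we would simply cite it rather than reprove it.
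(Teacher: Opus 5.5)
The paper states this proposition in its preliminaries as a known fact and gives no proof, so there is no argument of the author's to compare against. Your proof is correct.

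Your first implication (a facet $F$ equals $A_g$ for a unique extreme $g$) is genuinely elementary. It rests on three things: exposedness of faces, the identity $A_g=A_{g_1}\cap A_{g_2}$ for $g=\frac12(g_1+g_2)$ combined with maximality, and the fact that a face of dimension $\dim X-1$ lying in $\{g=1\}$ linearly spans $X$. Together these give both extremality and uniqueness.

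Your second implication, however, is essentially a citation. The polar face-lattice anti-isomorphism already contains the statement ``vertices of $B_{X^*}$ correspond to facets of $B_X$'', which is the proposition itself. Your steps 2--4 in that half are abandoned attempts and should be removed. Note also that your labels $(\Leftarrow)$ and $(\Rightarrow)$ are interchanged relative to the statement.

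If you want that half to be as self-contained as the first, use the facet description $B_X=\bigcap_{G}\{x\in X:\ h_G(x)\le 1\}$. Here $G$ runs over the finitely many facets and $h_G$ is the functional produced in your first part. Taking polars gives $B_{X^*}=\mathrm{conv}(\{0\}\cup\{h_G\}_G)$. Since $0$ is an interior point of $B_{X^*}$, it is not extreme, so every $f\in ext(B_{X^*})$ equals some $h_G$. Hence $A_f=A_{h_G}=G$ is a facet. This needs only the halfspace representation of a full-dimensional polytope and the bipolar theorem, rather than the whole face-lattice duality.
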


\noindent Evidently, $x\in sm(B_X)$ if and only if $x\in relint~(F)$ for a facet $F$ of $B_X$. Smoothness in the dual space can also be characterized in terms of the absolute norm attainment sets \cite{R24}.

\begin{lemma}\label{smooth}
Let $Z$ be a finite-dimensional Banach space. Then, $f\in sm(B_{Z^*})$ if and only if $A_f=\{z_0\}$ for some unit vector $z_0$. 
\end{lemma}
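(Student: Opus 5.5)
The statement to prove is Lemma~\ref{smooth}: in a finite-dimensional $Z$, $f\in sm(B_{Z^*})$ if and only if $A_f=\{z_0\}$ for a unit vector $z_0$. I would use duality: by reflexivity, $Z=Z^{**}$. Since $f\in S_{Z^*}$, the supporting functionals of $B_{Z^*}$ at $f$ are exactly the norm-one elements $z\in Z$ with $f(z)=1$, that is, the set $A_f$. (By finite dimensionality $f$ attains its norm, so $A_f\neq\emptyset$.) Smoothness of $f$ means there is exactly one such supporting functional, so $f$ is smooth if and only if $A_f$ is a singleton. The plan is to check both directions carefully through this identification.

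\emph{Step 1: identify supporting functionals.} Let $\Phi\in S_{Z^{**}}$ with $\Phi(f)=\|f\|=1$. The canonical map $z\mapsto \hat z$ is an isometric isomorphism $Z\to Z^{**}$, so $\Phi=\hat z$ for a unique $z\in S_Z$ with $f(z)=1$, i.e.\ $z\in A_f$. Conversely, every $z\in A_f$ gives a supporting functional $\hat z$ at $f$, since $\|\hat z\|=\|z\|=1$ and $\hat z(f)=1$.

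\emph{Step 2: conclude.} By definition, $f\in sm(B_{Z^*})$ precisely when the set of such $\Phi$ is a singleton. Step~1 puts these $\Phi$ in bijection with $A_f$, so $f$ is smooth if and only if $A_f=\{z_0\}$ for some unit vector $z_0$.

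The only delicate point is to fix the definition of smoothness as uniqueness of the norming functional in the bidual. If the paper instead means G\^ateaux differentiability of the norm, I would first invoke the standard equivalence between differentiability of the norm at $f$ and uniqueness of the supporting functional there. I would also make sure $f$ is normalized, since $A_f$ is defined by $f(x)=\|f\|$; this is automatic because smooth points of $B_{Z^*}$ lie on the unit sphere.
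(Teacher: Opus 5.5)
Your argument is correct. The paper gives no proof of this lemma; it is quoted from \cite{R24} and used as a black box. Your proposal therefore supplies a self-contained argument where the paper relies on a citation.

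Your route is the natural one. By definition, $f\in S_{Z^*}$ is smooth exactly when there is a unique $\Phi\in S_{Z^{**}}$ with $\Phi(f)=1$. Surjectivity of the canonical embedding $z\mapsto\hat z$ identifies the set of such $\Phi$ bijectively with $A_f$.

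What this buys over the citation is transparency about hypotheses. Finite dimensionality enters only through surjectivity of $Z\to Z^{**}$, which also makes $A_f\neq\emptyset$ automatic, since Hahn--Banach always provides some norming $\Phi\in S_{Z^{**}}$. So the same proof works verbatim for any reflexive $Z$. Without reflexivity the lemma genuinely fails: any $f\in S_{\ell_1}=S_{c_0^*}$ with all coordinates nonzero is a smooth point of $B_{\ell_1}$ (its only supporting functional is $(\mathrm{sgn} f_i)_i\in\ell_\infty$), yet $A_f=\emptyset$ in $c_0$.

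One bookkeeping remark. Your closing observation that normalization is automatic covers only the forward direction. For the converse, the lemma must be read with $\|f\|=1$: for nonzero $f$ one has $A_f=A_{f/\|f\|}$, so a non-normalized $f$ can have a singleton $A_f$ without being a point of $S_{Z^*}$ at all. Your Step~1 already assumes $f\in S_{Z^*}$, so nothing in your proof is affected.
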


\medskip

\noindent If $X$ is a polyhedral space then so is $X^*$. Since $|ext(B_{X^*})|=k<\infty$, by the Krein-Milman Theorem 
\[
B_{X^*}=\{\sum_{j=1}^k \lambda_jf_j:~f_j\in ext(B_{X^*}),~\sum_{j=1}^k\lambda_j=1, ~\lambda_j\geq 0~\text{for all}~j\}.
\] 
If $x$ is a vector in $X$, then $\|x\|=1$ implies that there exists $f\in ext(B_{X^*})$ such that $f(x)=1$. Conversely, if $\max\{|f(x)|:~f\in ext(B_{X^*})\}=1$, then we have $\sum_{j=1}^k \lambda_jf_j(x)\leq 1$, for $f_j\in ext(B_{X^*})$ and $\lambda_j\geq 0$ for all $j$, proving $\|x\|=1$. Thus,  
\begin{equation}\label{unit sphere}
S_X = \{x\in X:~\max\{|f(x)|:~f\in ext(B_{X^*})\}=1\}.
\end{equation}
\noindent For detailed study of polyhedral spaces, the readers are referred to \cite{FHHSPZ01, GM72, SPBB19, R70}.


\section{Proofs of the Main Results and Further Discussion}

\noindent In this section we prove our main results and discuss few results on the restriction of extreme functionals.

\subsection{Proofs of the Main Results}

\begin{proof}[Proof of Theorem \ref{smooth HB liftings}]
First suppose that $f$ is a smooth functional on $Y$ that extends to a smooth functional  $\widetilde{f}$ on $X$ with the preservation of norm. Since $f$ is smooth, it attains its absolute norm at a unique point, i.e., $A_f=\{y_0\}$ for some unit vector $y_0$. Since  $\widetilde{f}$ is a Hahn-Banach extension of $f$, we have $A_f\subset A_{\widetilde{f}}$. At the same time $\widetilde{f}$ is a smooth linear functional, and thus it follows that $A_{\widetilde{f}}=\{y_0\}$. Therefore,  $\widetilde{f}$ is an exposing functional of $y_0$, and $y_0\in ext(B_X)$.

\medskip

\noindent Conversely, let $g$ be a smooth functional on $Y$ and $A_g=\{y_0\}$. Then $y_0\in ext(B_Y)$ and by the hypothesis $y_0\in ext(B_X)$. We now consider the (non-empty) collection
\[
S_{y_0}=\{\widetilde{f}\in ext(B_X):~\widetilde{f}(y_0)=1\}.
\]
Let $H$ be a subspace of $X$ defined as $H=\{x\in X:~\widetilde{f}(x)=0,~\widetilde{f}\in S_{y_0}\}$. Let $B_{y_0}=\{\widetilde{f}\in ext(B_{X^*}):~|\widetilde{f}(y_0)|<1\}$ and $C_{y_0}=\{\widetilde{f}\in ext(B_{X^*}):~|\widetilde{f}(y_0)|=1\}$. We observe that $C_{y_0}=S_{y_0}\cup (-S_{y_0})$ and $ext(B_{X^*})=B_{y_0}\cup C_{y_0}$. Also, the subspace $H$ is nothing but the intersection of kernels of all the linear functionals contained in $C_{y_0}$. If $H$ is a non-trivial subspace, we choose $u\in H$ and a real number $\lambda$ with sufficiently small absolute value such that $\max\{|\widetilde{f}(y_0\pm \lambda u)|:~\widetilde{f}\in B_{y_0}\}<1-\delta$ for some positive real number $\delta$. Also, it follows from the choice of $u$ that $|\widetilde{f}(y_0\pm \lambda u)|=1$ for each $\widetilde{f}\in C_{y_0}$. Thus,
\[
\max\{|\widetilde{f}(y_0\pm \lambda u)|:~\widetilde{f}\in ext(B_{X^*})\}=1.
\]
and we get $\|y_0\pm \lambda u\|=1$. However, then $y_0=\frac{1}{2}((y_0+\lambda u)+(y_0-\lambda u))$ is a convex combination of two unit vectors, a contradiction to the fact $y_0\in ext(B_X)$. This proves that $H$ must be trivial and shows $S_{y_0}$ contains a (algebraic) basis of $X^*$.

\medskip

\noindent Since $g\in sm(Y)$, $g\in relint~(W)$ for some facet $W$ of $B_{Y^*}$. Then by Proposition \ref{prop:smooth}
\[
g = \sum_{j=1}^p\lambda_jf_j,
\]
where 
\[
ext~(W)=\{f_1,f_2,\dots,f_p\}, \quad \sum_{j=1}^p\lambda_j=1, \quad \text{and}~ \lambda_j>0, ~\text{for all}~j.
\]
We now obtain a specific description of $W$. Let $\Pi:Y\to Y^{**}$ denote the canonical embedding. Then $\Pi(y_0)$ is an extreme point of $ext(B_{Y^{**}})$ and $\Pi(y_0)$ uniquely supports a facet of $B_{Y^*}$. Since $\Pi(y_0)(g)=1$, and $g\in relint~(W)$, we have $A_{\Pi(y_0)}=W$. Consequently, $f\in W$ if and only if $\Pi(y_0)(f)=f(y_0)=1$, which shows
\[
W=\{f\in B_{Y^*}:~f(y_0)=1\}.
\]
We now divide $S_{y_0}$ into two disjoint subsets
\begin{align*}
& S_{y_0}'=\{\widetilde{f}\in S_{y_0}:~\widetilde{f}|_Y=f,~f\in ext~(W)\},\\
& S_{y_0}''=\{\widetilde{f}\in S_{y_0}:~\widetilde{f}|_Y=f,~f\notin ext~(W)\}.
\end{align*}
Basically, $S_{y_0}'$ consists of members of $S_{X^*}$ that restricts to an extreme point of the facet $W$ of $B_{Y^*}$ and $S_{y_0}''$ represents the functionals on $X$ that restricts to an ordinary point (non-extreme) of the facet $W$.

\medskip

\noindent Let $I_j=\{\widetilde{f}\in S_{X^*}:~\widetilde{f}|_Y=f_j\}$ for each $j=1,2,\dots,p$. For convenience, assume
\[
I_j=\{\widetilde{f}_{1,j},\widetilde{f}_{2,j},\dots, \widetilde{f}_{n_j,j}\}, \qquad j=1,2,\dots,p,
\]
for natural numbers $n_1,n_2,\dots,n_p$. Let
\[
\mathcal{B}=(\bigcup_{j=1}^p I_j) \cup \mathcal{C},
\]
where $\mathcal{C}$ is a subset of $S_{y_0}''$ so that $\mathcal{B}$ contains a basis of $X^*$. Existence of such a set $\mathcal{B}$ is always guaranteed, since $S_{y_0}$ contains a basis of $X^*$. Anyway we may always choose $\mathcal{C} = \emptyset$, if the collection $(\bigcup_{j=1}^p I_j)$ already contains a basis of $X^*$. Let
\[
\mathcal{C}=\{\widetilde{g}_1,\widetilde{g}_2,\dots, \widetilde{g}_k\}~\qquad \text{and}  \qquad \widetilde{g}_i|_Y=g_i \quad \text{for each}~i=1,2,\dots,k.
\]
It now follows from the description of $W$ that $g_i\in W$ for each $i=1,2,\dots,k$.

\medskip

\noindent We now construct a Hahn-Banach extension $\widetilde{g}$ of $g$ such that $\widetilde{g}$ is a smooth linear functional on $X$. In particular, we shall show that the absolute norm attainment set of $g$ is a singleton set and use Lemma \ref{smooth} to conclude $\widetilde{g}$ is smooth. Let $\widetilde{g}:X\to \mathbb{R}$ be defined by
\[
\widetilde{g}(x)= \left[\sum_{j=1}^p\sum_{i=1}^{n_j}\alpha_j(\frac{1}{n_j}\widetilde{f}_{i,j})+\beta_{1}\widetilde{g}_{1}+\beta_{2}\widetilde{g}_{2}+\dots+\beta_{k}\widetilde{g}_k\right](x), \qquad x\in X,
\]

\noindent We now specify the choices for the scalars $\alpha_1, \alpha_2,\dots \alpha_p, \beta_{1}, \beta_{2}, \dots, \beta_k$:\\

\noindent Since $ext~(W)=\{f_1,f_2,\dots,f_p\}$, for each $r\in \{1,2,\dots, k\}$ there exist scalars $\mu_{1,r},\mu_{2,r}, \dots, \mu_{p,r}\in [0,1]$ with $\mu_{1,r}+\mu_{2,r}+ \dots +\mu_{p,r}=1$ such that
\[
g_{r}=\mu_{1,r}f_1+\mu_{2,r}f_2+ \dots +\mu_{p,r}f_p.
\]
For each $r$, choose $\beta_{r}>0$ such that
\[
\sum_{r=1}^{k}\beta_{r}\mu_{j,r}<\min\{\lambda_1,\lambda_2,\dots, \lambda_p\}, \quad \text{for each}~j\in \{1,2,\dots, p\}.
\]
Such a choice is always possible, since $\min\{\lambda_1,\lambda_2,\dots, \lambda_p\}>0$. Let
\[
\sum_{r=1}^{k}\beta_{r}\mu_{j,r}=t_j\lambda_j,\quad \text{for some}~t_j\in (0,1),~\text{for each}~j\in \{1,2,\dots, p\}.
\]
Also, let
\[
\alpha_j=(1-t_j)\lambda_j, \quad \text{for each}~j\in \{1,2,\dots, p\}.
\]
With such a specification, we claim that $\widetilde{g}$ is a smooth Hahn-Banach extension of $g$. Firstly, $\widetilde{g}$ is an extension of $g$. Observe that
\begin{align*}
\widetilde{g}\lvert_Y & = \left[\sum_{j=1}^p\sum_{i=1}^{n_j}\alpha_j(\frac{1}{n_j}\widetilde{f}_{i,j})+\beta_{1}\widetilde{g}_{1}+\beta_{2}\widetilde{g}_{2}+\dots+\beta_{k}\widetilde{g}_k\right]\Bigg\lvert_Y\\
& = \sum_{j=1}^p\alpha_jf_j+\beta_{1}g_{1}+\beta_{2}g_{2}+\dots+\beta_{k}g_k.
\end{align*}
It follows from the choice of the scalars $\alpha_1, \alpha_2,\dots \alpha_p, \beta_{1}, \beta_{2}, \dots, \beta_k$ that
\[
(\alpha_j+\sum_{r=1}^{k}\beta_{r}\mu_{j,r}) = (t_j\lambda_j+(1-t_j)\lambda_j) \qquad j\in \{1,2,\dots,p\}.
\]
Therefore, we get
\begin{align*}
\widetilde{g}\lvert_Y & = (\alpha_1+\sum_{r=1}^{k}\beta_{r}\mu_{1,r})f_1+(\alpha_2+\sum_{r=1}^{k}\beta_{r}\mu_{2,r})f_2+\dots+(\alpha_p+\sum_{r=1}^{k}\beta_{r}\mu_{p,r})f_p\\
& = (t_1\lambda_1+(1-t_1)\lambda_1)f_1+(t_2\lambda_2+(1-t_2)\lambda_2)f_2+\dots+(t_p\lambda_p+(1-t_p)\lambda_p)f_p\\
& = \lambda_1f_1+\lambda_2f_2+\dots+\lambda_pf_p\\
& = g.
\end{align*}

\medskip

\noindent Secondly, $\widetilde{g}$ is norm-preserving, since
\begin{align*}
1 =\|g\| & \leq \|\widehat{g}\|\\& \leq (\alpha_1+\sum_{r=1}^{k}\beta_{r}\mu_{1,r})+(\alpha_2+\sum_{r=1}^{k}\beta_{r}\mu_{2,r})+\dots+(\alpha_p+\sum_{r=1}^{k}\beta_{r}\mu_{p,r})\\
& =1.
\end{align*}
Thirdly, $\widetilde{g}$ attains its absolute norm only at $y_0$, since the collection $\mathcal{B}$ forms a basis of $X^*$, and
\[
A_{\widetilde{g}}=(\bigcap_{j=1}^{p}\bigcap_{i=1}^{n_j}A_{\widetilde{f}})\cap (\bigcap_{r=1}^{k}A_{\widetilde{g}_{r}}) = \bigcap_{l\in \mathcal{B}} (y_0+\ker~l)=\{y_0\}.
\]
Consequently, $\widetilde{g}$ is a smooth linear functional on $X$, and the proof is now complete.
\end{proof}

\noindent We now prove the remaining corollaries.

\begin{proof}[Proof of Corollary \ref{first cor}]
Suppose every smooth linear functional in $Y^*$ admits a smooth Hahn--Banach extension to the entire space $X$. Let $y_0\in ext(B_Y)$ and $f\in S_{Y^*}$ be the exposing functional of $y_0$. We know that $f$ admits a smooth Hahn--Banach extension $\widetilde{f}$ to $X$, and $A_f\subset A_{\widetilde{f}}$. However, since $\widetilde{f}$ is smooth, it follows from Lemma \ref{smooth} that $A_{\widetilde{f}}=\{y_0\}$. Therefore, $y_0\in ext(B_X)$, and we get $ext(B_Y)\subset ext(B_X)$. Next, assume that $ext(B_Y)\subset ext(B_X)$, and let $f\in sm(Y)$. Then $A_f\subset ext(B_X)$, and by Theorem \ref{smooth HB liftings}, $f$ extends to a smooth norm--preserving extension to $X$.
\end{proof}

\noindent Corollary \ref{second cor} is a direct consequence of Theorem \ref{smooth HB liftings} and Corollary \ref{first cor}.

\subsection{Restrictions of Extreme Functionals}

\noindent As noted in the introduction, a norm-preserving restriction of an extreme functional need not be extreme. We will shortly illustrate the situation in Example~\ref{norm pres. restr.} in the next section. Before that we record the following characterization for when the restriction of an extreme functional remains extreme.

\begin{theorem}\label{Restr. extr.}
Let $X$ be a polyhedral space and $Y$ be a subspace of it. Let $\widetilde{f}$ be an extreme point of $B_{X^*}$, then $\widetilde{f}$ restricts to an extreme point of $B_{Y^*}$ if and only if the absolute norm attainment set of $\widetilde{f}$ contains smooth points of $Y$.
\end{theorem}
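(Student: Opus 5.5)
The plan is to reduce both directions to the correspondence between extreme functionals and facets stated in the Preliminaries, applied in $Y$ instead of $X$. Since $Y$ is again a polyhedral space, that proposition says a functional $h\in B_{Y^*}$ is extreme exactly when $A_h$ is a facet of $B_Y$. It also says each facet is $A_h$ for a unique extreme $h$. Put $f=\widetilde{f}|_Y$. Throughout I read ``$A_{\widetilde{f}}$ contains smooth points of $Y$'' as $A_{\widetilde{f}}\cap sm(B_Y)\neq\emptyset$.

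\emph{($\Leftarrow$)} Suppose $y\in A_{\widetilde{f}}\cap sm(B_Y)$.
\begin{enumerate}
\item Since $\|f\|\le\|\widetilde{f}\|=1$ and $f(y)=\widetilde{f}(y)=1$ with $\|y\|=1$, we get $\|f\|=1$. So the restriction is norm preserving, and $A_f=A_{\widetilde{f}}\cap S_Y$ contains $y$.
\item $A_f$ is a proper face of $B_Y$, being an exposed face.
\item By the remark after Proposition \ref{prop:smooth}, the smooth point $y$ lies in $relint~(F)$ for a unique facet $F$ of $B_Y$. A face containing a relative interior point of $F$ contains all of $F$; this follows from the convex-combination description in Proposition \ref{prop:smooth} and the definition of a face.
\item So $F\subset A_f$, and maximality of $F$ among proper faces gives $A_f=F$.
\item Let $h$ be the unique extreme functional with $A_h=F$. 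A facet of $B_Y$ affinely spans a hyperplane of $Y$ not passing through $0$. A linear functional equal to $1$ on that hyperplane is determined by it, so $f=h$.
\item Hence $f\in ext(B_{Y^*})$.
\end{enumerate}

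\emph{($\Rightarrow$)} Suppose $f$ is extreme in $B_{Y^*}$.
\begin{enumerate}
\item Extreme points of $B_{Y^*}$ lie on $S_{Y^*}$, so $\|f\|=1$.
\item By the proposition applied in $Y$, $A_f$ is a facet $F$ of $B_Y$.
\item Pick $y\in relint~(F)$, for instance the barycenter of $ext(F)$ as in Proposition \ref{prop:smooth}. Then $y\in sm(B_Y)$.
\item Also $\widetilde{f}(y)=f(y)=1=\|\widetilde{f}\|$, so $y\in A_{\widetilde{f}}$, as required.
\end{enumerate}

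The main obstacle is step 3 of the first direction: a smooth point of $B_Y$ lying in the exposed face $A_f$ forces $A_f$ to be the whole facet, and not something lower-dimensional. I would argue it via Proposition \ref{prop:smooth}. Write $y=\sum\lambda_k x_k$ with all $\lambda_k>0$ over $ext(F)$. Then the face property, applied to the splitting $y=\lambda_k x_k+(1-\lambda_k)z_k$ with $z_k\in F$, puts every $x_k$ in $A_f$, and convexity then gives $F\subset A_f$.

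A secondary point is the uniqueness in step 5, which identifies $f$ with the extreme functional exposing $F$. This uses that $\dim\operatorname{aff}F=\dim Y-1$, which holds because $F$ is a facet of a full-dimensional polytope in $Y$.
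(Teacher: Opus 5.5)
Your proposal is correct and follows the same route as the paper. In one direction, the smooth point lies in $relint~(F)$ for a facet $F$ of $B_Y$, so $F\subset A_f$ and maximality forces $A_f=F$; in the other, the facet $A_f$ contains relative-interior (hence smooth) points, and these lie in $A_{\widetilde{f}}$. Your additional steps fill in details the paper's proof passes over silently: checking $\|f\|=1$, proving that a face meeting $relint~(F)$ contains $F$, and showing that $f$ equals the unique extreme functional exposing $F$ because $F$ affinely spans a hyperplane not through $0$.
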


\begin{proof}
Suppose $\widetilde{f}\in ext(B_{X^*})$ and $A_{\widetilde{f}}$ includes members of $sm(Y)$. Let $f$ be the restriction of $\widetilde{f}$ on $Y$. Then $A_f$ includes a smooth point $y_0$ contained in $relint~(F)$ of a facet $F$ of $B_Y$, and it follows that $F\subset A_f$. Since $A_f$ is a face of $B_Y$ and $F$ is a maximal face of $B_Y$, we get $A_f=F$. Consequently, $f\in ext(B_{Y^*})$. Conversely, if $\widetilde{f}\in ext(B_{X^*})$ restricts to a member $f$ of $ext(B_{Y^*})$, then $A_f$ supports a facet $M$ of $B_Y$. In particular, $relint~(M)\subset A_f \subset A_{\widetilde{f}}$, which shows that $A_{\widetilde{f}}\cap sm(Y)\neq \emptyset$, and the proof is now complete.
\end{proof}
\noindent The following corollary is an easy application of the above theorem and the fact that every extreme functional on a subspace admits an extreme extension to the entire space.
\begin{corollary}
Let $X$ be a polyhedral space and $Y$ be a subspace of it. Then\[ext(B_{Y^*})=\left\{f\in Y^*:~f=\widetilde{f}\lvert_{Y},~\widetilde{f}\in ext(B_{X^*}),~(A_{\widetilde{f}}\cap sm(Y))\neq \emptyset\right\}.\]
\end{corollary}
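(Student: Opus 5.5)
The plan is to prove the two inclusions separately, taking Singer's theorem (Theorem~\ref{Thm: Singer}) and Theorem~\ref{Restr. extr.} as the main tools. Throughout, $\widetilde{f}\in ext(B_{X^*})$ has $\|\widetilde{f}\|=1$, and $sm(Y)$ denotes the smooth points of $S_Y$.

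\emph{The inclusion ``$\supseteq$''.} This is immediate from one direction of Theorem~\ref{Restr. extr.}. Let $f=\widetilde{f}|_Y$, where $\widetilde{f}\in ext(B_{X^*})$ and $A_{\widetilde{f}}\cap sm(Y)\neq\emptyset$. Pick $y_0\in A_{\widetilde{f}}\cap sm(Y)$. Then $f(y_0)=\widetilde{f}(y_0)=1$ and $\|f\|\le\|\widetilde{f}\|=1$, so $\|f\|=1$ and $y_0\in A_f$. The sufficiency part of Theorem~\ref{Restr. extr.} then gives $f\in ext(B_{Y^*})$.

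\emph{The inclusion ``$\subseteq$''.} Let $f\in ext(B_{Y^*})$. By Singer's theorem, $f$ has a norm-preserving extension $\widetilde{f}$ that is an extreme point of $B_{X^*}$. So $f=\widetilde{f}|_Y$ with $\widetilde{f}\in ext(B_{X^*})$, and it remains to check that $A_{\widetilde{f}}\cap sm(Y)\neq\emptyset$. For this I would apply the necessity part of Theorem~\ref{Restr. extr.}, since $\widetilde{f}$ restricts to the extreme point $f$. To make that step explicit: since $Y$ is polyhedral, $A_f$ is the facet of $B_Y$ exposed by $f$ (the facet--extreme functional correspondence in the Preliminaries, applied to $Y$). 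By Proposition~\ref{prop:smooth} this facet has nonempty relative interior, and those relative-interior points are smooth points of $Y$. Finally $A_f\subset A_{\widetilde{f}}$, because $\widetilde{f}$ extends $f$ with the same norm $1$.

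\emph{Main obstacle.} The only delicate point is the norm-preservation bookkeeping. In ``$\supseteq$'' we need $\|f\|=1$ so that $A_f$ is defined with the same normalization as $A_{\widetilde{f}}$; the point $y_0$ supplies this. In ``$\subseteq$'' we need $A_f\subset A_{\widetilde{f}}$, which relies on Singer's extension being norm-preserving. Once these are in place, the rest is a direct citation of Theorem~\ref{Restr. extr.}.
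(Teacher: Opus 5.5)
Your proposal is correct and follows the same route as the paper, which derives the corollary from Singer's extreme-extension theorem together with the two directions of Theorem~\ref{Restr. extr.}. Your added bookkeeping makes explicit what the paper leaves implicit: that $\|f\|=1$ follows from the point $y_0$, and that $A_f\subset A_{\widetilde{f}}$ holds because Singer's extension preserves the norm.
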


\section{Examples and Remarks}

\noindent We start with an example which shows that smooth functional on a subspace of a polyhedral space cannot be always extended to a smooth functional on the whole space with the preservation of norm.

\begin{example}\label{nonexample: smooth}
Let $X=(\mathbb{R}^3,\|\cdot\|_k)$, where 
\[
\|(x,y,z)\|_k=\max\{\frac{\sqrt{3}|y|+|x|+|\frac{|y|}{\sqrt{3}}-|x||}{2},|z|\}.
\]
Actually, $X=Z\oplus_\infty \mathbb{R}$, where $Z$ is a two-dimensional Banach space equipped with the (regular) hexagonal norm and the closed unit ball of $X$ is a hexagonal cylinder. Consider the subspace 
\[
Y=\{(x,y,z)\in X:~x=0\}.
\]
Consider the unit vector $(x_0,y_0,z_0)=(0,\frac{\sqrt{3}}{2},1)$ in $X$ which also lies in the unit sphere of $Y$. Let $f:Y\to \mathbb{R}$ be defined by
\[
f(x,y,z)= \frac{y}{\sqrt{3}}+\frac{z}{2}, \qquad (x,y,z)\in Y.
\]
It is not difficult to see that $\|f\|=1$ and $f$ attains its absolute norm only at $(x_0,y_0,z_0)$. Therefore, $f$ is a smooth point of $B_{Y^*}$. Any Hahn-Banach extension of $f$ is of the form 
\[
\widetilde{f}(x,y,z)=ax+\frac{y}{\sqrt{3}}+\frac{z}{2}, \qquad (x,y,z)\in X,
\]
for a real number $a$. If $a\neq 0$, then we choose $(\frac{sgn(a)}{2},\frac{\sqrt{3}}{2},1)\in S_X$ to conclude that 
\[
\|\widetilde{f}\|\geq |f(\frac{sgn(a)}{2},\frac{\sqrt{3}}{2},1)|>1.
\]
Therefore, $a=0$, since $\widetilde{f}$ is a Hahn-Banach extension of $f$. Thus, the unique Hahn-Banach extension of $f$ is given by 
\[
\widetilde{f}(x,y,z)=\frac{y}{\sqrt{3}}+\frac{z}{2}, \qquad (x,y,z)\in X.
\]
However, then
\[
\{(\alpha,\frac{\sqrt{3}}{2},1)\in S_X:~ \alpha\in [-1,1]\}\subset A_{\widetilde{f}},
\]
and by Lemma \ref{smooth}, $\widetilde{f}$ is not smooth.
\end{example}

\noindent However, the techniques developed in the article can be applied to detect the possibility of a smooth Hahn--Banach extension of smooth linear functional from a purely geometric perspective. We explain this fact pictorially in the following example.

\begin{example}
\noindent Case (A): Let $X=\ell_1^3$ and $Y=\{(x,y,z)\in X:~z=0\}$. Then $ext(B_Y)\subset ext(B_X)$, and by Corollary \ref{first cor} every smooth functional admits a smooth Hahn-Banach extension.

\medskip

\noindent Case (B): Let $X=\ell_\infty^3$ and $Y=\{(x,y,z)\in X:~z=0\}$. Then no extreme point of $B_Y$ is an extreme point of $B_X$. Therefore, absolute norm attainment set of any smooth functional on $Y$ does not include any extreme point of $B_X$. Therefore, by Theorem \ref{smooth HB liftings} no smooth linear functional on $Y$ admits a smooth norm--preserving extension to the entire space $X$.

\begin{figure}[h]

\begin{minipage}{0.52\textwidth}
\centering
\begin{tikzpicture}
\begin{axis}[
    view={20}{20},
    axis lines=none,
    axis equal image,
    xmin=-1.6, xmax=1.6,
    ymin=-1.6, ymax=1.6,
    zmin=-1.6, zmax=1.6,
    ticks=none,
    width=\textwidth,
]

\addplot3[dotted, thick] coordinates {(-1.6,0,0) (1.6,0,0)};
\node at (axis cs:1.75,0,0) {$x$};

\addplot3[dotted, thick] coordinates {(0,-1.6,0) (0,1.6,0)};
\node at (axis cs:0,1.75,0) {$y$};

\addplot3[dotted, thick] coordinates {(0,0,-1.6) (0,0,1.6)};
\node at (axis cs:0,0,1.75) {$z$};

\addplot3[thick] coordinates {
(1,0,0) (0,1,0) (-1,0,0) (0,-1,0) (1,0,0)
};

\addplot3[thick] coordinates {(0,0,1) (1,0,0)};
\addplot3[thick] coordinates {(0,0,1) (0,1,0)};
\addplot3[thick] coordinates {(0,0,1) (-1,0,0)};
\addplot3[thick] coordinates {(0,0,1) (0,-1,0)};

\addplot3[thick] coordinates {(0,0,-1) (1,0,0)};
\addplot3[thick] coordinates {(0,0,-1) (0,1,0)};
\addplot3[thick] coordinates {(0,0,-1) (-1,0,0)};
\addplot3[thick] coordinates {(0,0,-1) (0,-1,0)};

\addplot3[very thick] coordinates {
(1,0,0) (0,1,0) (-1,0,0) (0,-1,0) (1,0,0)
};

\end{axis}
\end{tikzpicture}

{\small Case (A)}
\end{minipage}

\begin{minipage}{0.42\textwidth}
\centering
\begin{tikzpicture}
\begin{axis}[
    view={120}{25},
    axis lines=none,
    axis equal image,
    xmin=-1.6, xmax=1.6,
    ymin=-1.6, ymax=1.6,
    zmin=-1.6, zmax=1.6,
    ticks=none,
    width=\textwidth,
]

\addplot3[dotted, thick] coordinates {(-1.6,0,0) (4,0,0)};
\node at (axis cs:3.75,0,0) {$x$};

\addplot3[dotted, thick] coordinates {(0,-1.6,0) (0,2,0)};
\node at (axis cs:0,2.2,0) {$y$};

\addplot3[dotted, thick] coordinates {(0,0,-1.6) (0,0,2)};
\node at (axis cs:0,0,2.2) {$z$};

\addplot3[thick] coordinates {
(-1,-1,-1) (1,-1,-1) (1,1,-1) (-1,1,-1) (-1,-1,-1)
};

\addplot3[thick] coordinates {
(-1,-1,1) (1,-1,1) (1,1,1) (-1,1,1) (-1,-1,1)
};

\addplot3[thick] coordinates {(-1,-1,-1) (-1,-1,1)};
\addplot3[thick] coordinates {(1,-1,-1) (1,-1,1)};
\addplot3[thick] coordinates {(1,1,-1) (1,1,1)};
\addplot3[thick] coordinates {(-1,1,-1) (-1,1,1)};

\addplot3[very thick] coordinates {
(-1,-1,0) (1,-1,0) (1,1,0) (-1,1,0) (-1,-1,0)
};

\end{axis}
\end{tikzpicture}

{\small Case (B)}
\end{minipage}

\end{figure}

\begin{center}
\begin{tikzpicture}
\begin{axis}[
    view={25}{20},
    axis lines=none,
    axis equal image,
    xmin=-2.8, xmax=2.8,
    ymin=-2.8, ymax=2.8,
    zmin=-2.8, zmax=2.8,
    ticks=none,
    width=11cm,
]

\addplot3[dotted, thick] coordinates {(-2.5,0,0) (2.5,0,0)};
\node at (axis cs:2.7,0,0) {$x$};

\addplot3[dotted, thick] coordinates {(0,-2.5,0) (0,2.5,0)};
\node at (axis cs:0,2.7,0) {$y$};

\addplot3[dotted, thick] coordinates {(0,0,-2.5) (0,0,2.5)};
\node at (axis cs:0,0,2.7) {$z$};

\coordinate (A) at (axis cs:1,0,0);
\coordinate (B) at (axis cs:-1,0,0);
\coordinate (C) at (axis cs:0,1,0);
\coordinate (D) at (axis cs:0,-1,0);
\coordinate (E) at (axis cs:0,0,1);
\coordinate (F) at (axis cs:0,0,-1);

\draw[thick] (A)--(C);
\draw[thick] (A)--(D);
\draw[thick] (A)--(E);
\draw[thick] (A)--(F);

\draw[thick] (B)--(C);
\draw[thick] (B)--(D);
\draw[thick] (B)--(E);
\draw[thick] (B)--(F);

\draw[thick] (C)--(E);
\draw[thick] (C)--(F);
\draw[thick] (D)--(E);
\draw[thick] (D)--(F);

\addplot3[
    mesh,
    domain=-2:2,
    y domain=-2:2,
    samples=2,
    samples y=2
]
({x},{y},{-y});

\addplot3[
    very thick,
    red
]
coordinates {
(1,0,0)
(0,0.5,-0.5)
(-1,0,0)
(0,-0.5,0.5)
(1,0,0)
};

\end{axis}

\end{tikzpicture}

\vspace{2mm}

\small{Case (C): $B_{\ell_1^3}$ and its intersection with the plane $y+z=0$.}

\end{center}

\noindent Case (C): In this case $X=\ell_1^3$ and $Y=\{(x,y,z)\in X:~y+z=0\}$. Then $(1,0,0)$ and $(-1,0,0)$ are the only extreme points of $B_Y$ which are also extreme points of $B_X$. Therefore, a smooth linear functional $f$ on $Y$ admits a smooth Hahn-Banach extension if and only if $A_f=\{(1,0,0)\}$ or $A_f=\{(-1,0,0)\}$.
\end{example}

\noindent As noted earlier, restriction of an extreme functional may not remain extreme and we illustrate the situation in the following example.

\begin{example}\label{norm pres. restr.}
Let $X$ be the space as in Example \ref{nonexample: smooth}. Then $ext(B_X)$ is the collection $\{\pm(\frac{1}{2}, \frac{\sqrt{3}}{2},1), \pm(\frac{1}{2}, -\frac{\sqrt{3}}{2},1), \pm (1,0,1), \pm(\frac{1}{2}, \frac{\sqrt{3}}{2},-1), \pm(\frac{1}{2}, -\frac{\sqrt{3}}{2},-1), \pm (1,0,-1)\}$.  Let $\widetilde{f}:X\to \mathbb{R}$ be defined by
\[
\widetilde{f}(x,y,z)=x-\frac{1}{\sqrt{3}}y, \qquad (x,y,z)\in X.
\]
Since $\widetilde{f}$ attains its norm at some extreme points of $B_X$, and
\[
\max \{|\widetilde{f}(\mathbf{u})|:~\mathbf{u}\in ext(B_X)\}=1,
\]
we have $\|\widetilde{f}\|=1$. Moreover, $\widetilde{f}\in ext(B_{X^*})$, since $\widetilde{f}$ uniquely supports the facet $F=\text{convex hull}\{(\frac{1}{2}, -\frac{\sqrt{3}}{2},1), (\frac{1}{2}, -\frac{\sqrt{3}}{2},-1), (1,0,1),(1,0,-1)\}$ of $B_X$.

\medskip

\noindent We consider the subspace
\[
Y=\{(x,y,z)\in X:~z=2x\}.
\]
It is easy to see that $ext(B_Y)=\{\pm(\frac{1}{2}, \frac{\sqrt{3}}{2},1), \pm(\frac{1}{2}, -\frac{\sqrt{3}}{2},1)\}$.
Let $\widetilde{f}|_Y=f$ and we have
\[
f(x,y,z)= x-\frac{1}{\sqrt{3}}y, \qquad (x,y,z)\in Y.
\]
Let $g,h$ be linear functionals on $Y$ defined by
\[
g(x,y,z)=2x, \quad h(x,y,z)= -\frac{2}{\sqrt{3}}y, \qquad (x,y,z)\in Y.
\]
By checking the values of $g$ and $h$ on $ext(B_Y)$, we conclude that $\|g\|=\|h\|=1$. We observe that $f=\frac{1}{2}(g+h)$, which shows $f$ is not an extreme point of $B_{Y^*}$.
\end{example}

\noindent Finally, we provide a counter-example to Theorem \ref{smooth HB liftings} in a non-polyhedral norm, which proves that the problem of smooth Hahn--Banach extension is finer than the problem of extreme Hahn--Banach extension.

\begin{example}\label{nonexample: nonpolyhedral}
Let $X=(\mathbb{R}^3,\|\cdot\|_s)$, with the norm $\|\cdot\|_s$ is defined as \begin{align*}
\|(x,y,z)\|_s = \max & \{|x|,\frac{1}{2}(y^2+z^2)^\frac{1}{2}(2-\sigma)(1+\sigma)\\
& +  \frac{1}{2}\max\{|y|,|z|\}(2+\sigma)(1-\sigma)\}\\& \qquad \text{where}~\sigma=sgn(yz).
\end{align*}
\noindent Basically, $X=\mathbb{R}\oplus_\infty \ell_{2,\infty}$, where $\ell_{2,\infty}$ is a two-dimensional real Banach space equipped with the Day-James type norm \cite{NS06} $\|\cdot\|_{2,\infty}$, defined as
\[
\|(y,z)\|_{2,\infty}=\begin{cases}(y^2+z^2)^\frac{1}{2}, \quad \text{if}~yz\geq 0\\\max(|y|,|z|), \quad \text{if}~yz\leq 0\end{cases}
\]
Consider the $2$-dimensional space \[Y=\{(x,0,z):~x,z\in \mathbb{R}\}\]contained in $X$. The closed unit ball $B_Y$ of $Y$ is given by \[B_Y=\{(x,0,z):~\max\{|x|,|z|\}=1\}.\]

\noindent Notice that $X$ is not a polyhedral space. Consider any point of the form $(1,a,b)$ with $a^2+b^2=1$ and $ab\geq 0$. If 
\[
(1,a,b)=\frac{1}{2}[(1+\varepsilon_1, a+\varepsilon_2, b+\varepsilon_3)+(1-\varepsilon_1, a-\varepsilon_2, b-\varepsilon_3)], ~\text{for real numbers}~ \varepsilon_1, \varepsilon_2,\varepsilon_3,
\]
then in order to have
\[
\|(1+\varepsilon_1, a+\varepsilon_2, b+\varepsilon_3)\|=1=\|(1-\varepsilon_1, a-\varepsilon_2, b-\varepsilon_3)\|,
\]
we require that
\[
\max\{|1+\varepsilon_1|,[(a+\varepsilon_2)^2+(b+\varepsilon_3)^2]^\frac{1}{2}\}=1=\max\{|1-\varepsilon_1|,[(a-\varepsilon_2)^2+(b-\varepsilon_3)^2]^\frac{1}{2}\},
\]
which shows
\[
\varepsilon_1=0,~\text{and}~\varepsilon_2^2+\varepsilon_3^2 + 2 (a \varepsilon_2 + b \varepsilon_3)\leq 0,~\varepsilon_2^2+\varepsilon_3^2- 2 (a \varepsilon_2 + b \varepsilon_3)\leq 0~\text{since}~a^2+b^2=1,
\]
and thus,
\[
\varepsilon_1=\varepsilon_2=\varepsilon_3=0.\]This shows that\[\{(1,a,b):~a^2+b^2=1,~ab\geq 0\}\subset ext(B_X).
\]
Therefore, $ext(B_X)$ is not finite, and $X$ is not polyhedral.

\noindent Now, consider $(1,0,1)\in ext(B_X)$ and the linear functional $f:Y\to \mathbb{R}$, defined by 
\[
f(x,0,z)=\frac{1}{2}(x+z), \qquad (x,0,z)\in Y. 
\]
It is not difficult to see that $f$ attains its absolute norm only at $(1,0,1)$ and thus, $f\in sm(B_{Y^*})$. Now, we look for a smooth Hahn-Banach extension of $f$. Any Hahn-Banach extension $\widetilde{f}$ of $f$ is of the form
\[
\widetilde{f}(x,y,z)=\frac{1}{2}(x+z)+cy, \qquad (x,y,z)\in X,
\]
for some real number $c$. If $c > 0$, then for sufficiently small $\delta>0$, we choose
\[
(x_0,y_0,z_0)=(1,\frac{4c\delta}{(4c^2+1)}, (1-\frac{16c^2\delta^2}{(4c^2+1)^2})^\frac{1}{2})\in S_X.
\]
We claim that $\frac{z_0}{2}+cy_0>\frac{1}{2}$. Otherwise, $\frac{z_0}{2}+cy_0<\frac{1}{2}$, and since $y_0^2+z_0^2=1$, on simplifying, we have
\[
0 \leq y_0^2+4c^2y_0^2-4cy_0,
\]
which is equivalent to
\[
y_0\geq \frac{4c}{1+4c^2},
\]
which is impossible, since $\delta$ is sufficiently small. Therefore,  $\frac{z_0}{2}+cy_0>\frac{1}{2}$, and we get
\[
\widetilde{f}(x_0,y_0,z_0)>1,
\]
a contradiction that $\widetilde{f}$ is a Hahn-Banach extension of $f$. This shows that we must have $c\leq 0$. Again, if $c<0$, then
\[
\widetilde{f}(1,\alpha,1)>1, \quad \text{for}~\alpha\in (-1,0). 
\]
We observe that
\[
\|(1,\alpha,1)\|_s=1, \quad \text{for}~\alpha\in (-1,0),
\]
Consequently, we arrive to a contradiction that $\widetilde{f}$ is a Hahn-Banach extension of $f$. Therefore, $c=0$, and the unique Hahn-Banach extension of $f$ is given by
\[
\widetilde{f}(x,y,z)=\frac{1}{2}(x+z), \qquad (x,y,z)\in X.
\]
In that case we get\[\{(1,\alpha,1):~\alpha\in [-1,0]\}\subset A_{\widetilde{f}},\]and therefore, $\widetilde{f}$ fails to be smooth by Lemma \ref{smooth}.
\end{example}

\begin{remark}
Observe that the space $X$ considered in example \ref{nonexample: nonpolyhedral} is non-polyhedral, and the subspace $Y$ of $X$ is polyhedral with $ext(B_Y)=\{\pm(1,0,1), \pm(1,0,-1)\}$. The smooth functional $f$ considered on the subspace $Y$ attains it absolute norm at an extreme point $(1,0,1)\in ext(B_Y)\cap ext(B_X)$. However, $f$ does not admit a smooth Hahn-Banach extension on $X$, even though $Y$ is polyhedral. Thus, Theorem \ref{smooth HB liftings} is violated even for \emph{polyhedral subspaces} of a non-polyhedral space.
\end{remark}

\bibliographystyle{amsplain}

\end{document}